\numberwithin{equation}{section}
\theoremstyle{plain}
\newtheorem{theorem}[equation]{Theorem}
\newtheorem{lemma}[equation]{Lemma}
\theoremstyle{definition}
\theoremstyle{remark} 
\newtheorem{remark}[equation]{Remark}
\newcommand{\bZ}{\mathbb{Z}}
\renewcommand{\ge}{\geqslant}
\renewcommand{\geq}{\geqslant}
\newcommand{\HH}{H\!H}
\renewcommand{\leq}{\leqslant}
\newcommand{\sym}{\mathfrak S}
\author{David Benson, Radha Kessar, and Markus Linckelmann}
\address{David Benson \\ 
Institute of Mathematics\\ 
Fraser Noble Building\\
University of Aberdeen\\ 
King's College\\ 
Aberdeen AB24 3UE\\ 
United Kingdom}
\address{Radha Kessar \\
Department of Mathematics\\
University of Manchester\\
Oxford Road\\
Manchester M193PL\\
United Kingdom}
\address{Markus Linckelmann \\
School of Science \& Technology \\
Department of Mathematics \\
City, University of London \\
Northampton Square \\
London EC1V 0HB \\
United Kingdom}
\subjclass[2020]{20C20}
\keywords{Hochschild cohomology, symmetric groups,
partition identities, blocks}
\title{Hochschild cohomology of symmetric groups and generating
  functions, II}
\begin{document}

\begin{abstract}
We  relate   the generating functions  of the dimensions of the Hochschild
cohomology in any fixed degree of  the symmetric groups   with
those of blocks of the symmetric groups.   We show that the first Hochschild
cohomology of  a positive defect block of a symmetric group is
non-zero, answering  in the
affirmative a question  of the third author. To do this, we prove a
formula expressing the dimension of degree one Hochschild cohomology as a sum of
dimensions of centres of blocks of smaller symmetric groups. This in turn is a
consequence of a general formula that makes more precise a
theorem of our previous paper describing the 
generating functions for the dimensions of Hochschild
cohomology  of symmetric groups.
\end{abstract}

\maketitle

\section{Introduction}
Let $ p$ be a prime number and $k$ a field of characteristic $p$.        
As a consequence of results in \cite{Fleischmann/Janiszczak/Lempken:1993a}, using the              
classification of finite simple groups, if $G$ is a finite group of order divisible by $p$, then
$\HH^1(kG)$ is non-zero.
 It is an open question  
\cite[Question 7.4]{Linckelmann:2018b}  whether for $G$ a finite group and $B$ a block of $kG$,  
if the  defect groups of $B$
are non-trivial, then   $\HH^1(B)  $ is non-zero. This question has been shown to have a positive
answer in some cases in \cite{Todea:2023a} and  \cite{Murphy:2023a}, for instance.
We prove that this question has an affirmative answer  if $G$ is a
symmetric group $\sym_n$ on $n$ letters.

\begin{theorem}\label{HH1-blocknonvanish}
Let $B$ a  block of $k\sym_n $   with  non trivial  defect groups.  Then $\HH^1(B)\ne 0 $.
\end{theorem}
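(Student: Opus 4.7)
The plan is to derive the theorem from the formula promised in the abstract, namely an equality expressing $\dim_k \HH^1(B)$ as a sum of dimensions of centres of certain blocks of $k\sym_m$ with $m<n$. Once such a formula is in hand, the theorem is immediate: every block has non-zero centre because the centre contains the block idempotent, so it suffices to show that the indexing set of the sum is non-empty whenever $B$ has positive defect.

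To set things up, I would parametrise the block as $B=B_{\kappa,w}$ by its $p$-core $\kappa$ and weight $w\ge 0$, with $|\kappa|+pw=n$; the defect groups of $B$ are non-trivial precisely when $w\ge 1$, so this is the case to treat. The next step is to prove a blockwise refinement of the generating function theorem of the previous paper. That theorem describes $\sum_n \dim \HH^*(k\sym_n)q^n$ in terms of partition statistics, and a block-level version must sort the contributions according to the core--quotient decomposition of the indexing partitions, attaching each contribution to the pair $(\kappa,w)$ to which it belongs.

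Restricting to degree one, I would expect only a narrow class of combinatorial indices to survive, each surviving index producing a summand that can be recognised, via the symmetric group version of the Cartan--Eilenberg / Burnside-style decomposition of Hochschild cohomology, as $\dim_k Z(B_{\kappa',w'})$ for some smaller block. The final step is then a concrete combinatorial check: given $(\kappa,w)$ with $w\ge 1$, exhibit at least one index which contributes a non-trivial term. A natural candidate is the one obtained by removing a single $p$-hook to land in $B_{\kappa,w-1}\subseteq k\sym_{n-p}$; exhibiting one such index suffices, since each term in the sum is at least one.

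The main obstacle is the derivation of the refined formula itself. The authors' earlier generating function identity is global, and breaking it into individual blocks requires carefully tracking how partition combinatorics (hook removal, $p$-cores, $p$-quotients, $p$-signs) interact with the Hochschild degree grading, and \emph{identifying} the resulting summands as centres of smaller blocks rather than as less tractable invariants. Once this bookkeeping is done, positivity of $\HH^1(B)$ follows with essentially no further work, since the centres on the right-hand side are manifestly non-zero and there is at least one summand whenever $w\ge 1$.
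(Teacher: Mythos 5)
There is a genuine gap. Your overall architecture agrees with the paper's: Theorem~\ref{HH1-blocknonvanish} is indeed deduced there as an immediate consequence of a formula (Theorem~\ref{HH1-blockdim}) expressing $\dim_k\HH^1(B)$ for a weight $w$ block as $\sum_{j=0}^{w-1}\dim_k Z(B_{pj})$ (doubled when $p=2$), and once such a formula is available the nonvanishing for $w\geq 1$ is trivial, exactly as you say. But your proposal does not actually establish the formula; you explicitly defer ``the derivation of the refined formula itself,'' and the route you sketch toward it is not viable as stated. Sorting the global generating function of the previous paper ``according to the core--quotient decomposition'' cannot by itself isolate the contribution of an individual block: the group-level identity only controls the sum over all blocks of $k\sym_n$, and to distribute that total among blocks one needs to know that all blocks of the same weight have equal Hochschild cohomology dimensions. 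That input is the Chuang--Rouquier derived equivalence, which is the first pillar of the paper's argument and is absent from your plan. The second missing input is the Nakayama conjecture (Brauer, Robinson), which counts the weight $w$ blocks of $k\sym_n$ by $p$-cores and yields the convolution identities that factor the generating functions as $t^sY(t^p)C_s(t^p)$ and $t^sZ(t^p)C_s(t^p)$; the third is the explicit degree-one formula from the earlier paper, namely $\sum_n\dim_k\HH^1(k\sym_n)t^n=\frac{t^p}{1-t^p}P(t)$ for $p\geq 3$ (and $\frac{2t^2}{1-t^2}P(t)$ for $p=2$), which, compared with $P(t)=\sum_s Z(t^p)C_s(t^p)t^s$, is what forces the block-level series to equal $\frac{t}{1-t}Z(t)$ and hence gives the sum of centres.

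A further specific difficulty: you propose to recognise the surviving degree-one summands as $\dim_k Z(B_{\kappa',w'})$ ``via the symmetric group version of the Cartan--Eilenberg / Burnside-style decomposition of Hochschild cohomology.'' The centraliser decomposition applies to the whole group algebra $k\sym_n$, not to an individual block; it does not split along blocks, so this identification has no justification at the block level and is precisely the kind of step the paper avoids by working with generating functions and derived equivalences instead. In short, the reduction of Theorem~\ref{HH1-blocknonvanish} to a sum-of-centres formula is correct and matches the paper, but the core of the argument --- proving that formula --- is left as an acknowledged obstacle, and the sketched bookkeeping omits the three ingredients (Chuang--Rouquier, Nakayama, and the explicit $r=1$ generating function) that make it work.
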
  

In fact, we give a  precise formula for the dimension  of
the first  Hochschild cohomology of a block of a symmetric group  as a
sum of  dimensions of  the centres of blocks of smaller symmetric
groups (Theorem~\ref{HH1-blockdim}), and this easily implies
Theorem~\ref{HH1-blocknonvanish}.

In order to state the formula, let us recall that to each   block $B$
of $k\sym_n $ is  associated a non-zero  integer  $w \leq \lfloor
n/p\rfloor$  called the  weight  of $B$, with the property that   the
Sylow $p$-subgroups   of $\sym_{pw} $ are defect  groups of  $B$ under
the  natural inclusion $\sym_{pw} \leq \sym_n$.   In particular, $B$
has  non-trivial defect groups if and only if  $w>0$. Moreover, by
Theorem~7.2 of Chuang and Rouquier \cite{Chuang/Rouquier:2008a} if $B$
and $B'$ are blocks of possibly different symmetric groups, with the
same weight, then  $B$ and $B'$  are derived
equivalent algebras, and consequently,  for any  $r\geq 0 $, we  have
$\dim_k\HH^r (B)  = \dim_k\HH^r (B')$.

For  $w \geq 0 $, denote by $B_{pw}$  the principal  block  of
$k\sym_{pw} $.  Then $B_{pw} $   has weight $w$  and  by the above, 
$\dim_k\HH^r (B)  =  \dim_k\HH^r (B_{pw})$ for any  weight $w$
block $B$ of  a symmetric group algebra.    Thus
Theorem~\ref{HH1-blocknonvanish}  is   a consequence of the following
result.    For $w \geq 0  $, let $\rho(pw, \varnothing) $ equal
the number of  partitions of $pw$ with empty $p$-core.

\begin{theorem}\label{HH1-blockdim} 
Let    $B$  be a  weight $w$-block of    a symmetric group  algebra over $k$.  If $p=2 $, then 
\[  \dim_k \HH^1(B)  =   2\sum_{j=0}^{w-1} \dim_k  Z(B_{pj} )  = 2
 \sum_{j=0}^{w-1}\rho(pj, \varnothing). \]
If $p \geq 3 $, then
\[ \dim_k \HH^1(B)  =   \sum_{j=0}^{w-1} \dim_k  Z(B_{pj} )  =
  \sum_{j=0}^{w-1}\rho(pj, \varnothing). \]
\end{theorem}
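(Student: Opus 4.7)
My plan is to reduce the theorem to a single identity of generating functions. Introduce
\[
F_1(t)=\sum_{n\geq 0}\dim_k\HH^1(k\sym_n)\,t^n,\qquad C(t)=\sum_{m\geq 0}c_m\,t^m,\qquad Z(t)=\sum_{w\geq 0}\rho(pw,\varnothing)\,t^{pw},
\]
where $c_m$ counts $p$-cores of size $m$. The classical bijection between partitions of $n$ and pairs ($p$-core, $p$-quotient) gives $P(t)=C(t)\,Z(t)$, with $P(t)=\prod_{i\geq 1}(1-t^i)^{-1}$; in particular $Z(t)=\prod_{i\geq 1}(1-t^{pi})^{-p}$.

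The first step is to rewrite $F_1(t)$ in block form. Nakayama's conjecture (a theorem for symmetric groups) classifies the blocks of $k\sym_n$ by pairs (weight $w$, $p$-core of size $n-pw$), and the Chuang--Rouquier derived equivalence quoted in the excerpt gives $\dim_k\HH^1(B)=\dim_k\HH^1(B_{pw})$ for every weight-$w$ block $B$. Setting $H_1(t):=\sum_{w\geq 0}\dim_k\HH^1(B_{pw})\,t^{pw}$, these two facts combine to give
\[
F_1(t)\;=\;C(t)\,H_1(t).
\]

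The second step is to pin down $F_1(t)$ itself. Based on low-degree checks (for example $\dim_k\HH^1(k\sym_2)=2$ at $p=2$, and $\dim_k\HH^1(k\sym_3)=1$ at $p=3$), I expect the identity $F_1(t)=\varepsilon_p\, t^p P(t)/(1-t^p)$, with $\varepsilon_p=2$ for $p=2$ and $\varepsilon_p=1$ otherwise; this is the explicit formula ``making more precise a theorem of our previous paper'' that the abstract refers to. Granting it, and using $P(t)/C(t)=Z(t)$, one obtains
\[
H_1(t)\;=\;\varepsilon_p\,\frac{t^p}{1-t^p}\,Z(t)\;=\;\varepsilon_p\sum_{w\geq 1}\Bigl(\sum_{j=0}^{w-1}\rho(pj,\varnothing)\Bigr)t^{pw}.
\]
Extracting the coefficient of $t^{pw}$ and invoking $\dim_kZ(B_{pj})=\rho(pj,\varnothing)$ (the centre of a block has dimension equal to the number of its ordinary characters) delivers both halves of Theorem~\ref{HH1-blockdim}.

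The main obstacle is the explicit identification of $F_1(t)$. A natural approach is the conjugacy-class decomposition $\HH^1(k\sym_n)\cong\bigoplus_{[\sigma]}H^1(C_{\sym_n}(\sigma),k)$: for $\sigma$ of cycle type $\lambda=(1^{m_1}2^{m_2}\cdots)$ the centraliser is $\prod_i(C_i\wr\sym_{m_i})$, and $H^1$ of each wreath-product factor is visible as the mod-$p$ part of its abelianisation. Summing over cycle types should recombine into the asserted product of a partition generating function with an elementary rational factor, but the combinatorial bookkeeping --- tracking, for each cycle type and each prime, exactly which cyclic and symmetric-group factors contribute --- is delicate. In particular, the doubling at $p=2$ should come from the sign characters of the $\sym_{m_i}$ contributing a $C_2$ class which is invisible at odd $p$, and the factor $t^p/(1-t^p)$ should track cycle types having at least one cycle whose length is divisible by $p$.
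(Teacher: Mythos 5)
Your reduction is correct and is, in substance, the paper's own argument; the main thing to reassure you about is the step you flag as the ``main obstacle''. The identity $F_1(t)=\varepsilon_p\,t^pP(t)/(1-t^p)$ (with $\varepsilon_2=2$ and $\varepsilon_p=1$ for $p\geq 3$; for $p=2$ this is $2t^2P(t)/(1-t^2)$) does not need to be re-derived by the centraliser-decomposition bookkeeping you sketch: it is precisely Theorem~1.2 of \cite{Benson/Kessar/Linckelmann:bkl5}, and the paper's proof of Theorem~\ref{HH1-blockdim} simply cites it. Granting that citation, the rest of your argument is complete and correct: the factorisations $F_1(t)=C(t)H_1(t)$ and $P(t)=C(t)Z(t)$ follow from the Nakayama conjecture, the Chuang--Rouquier derived equivalences and the core/quotient bijection, $C(t)$ is invertible in $\bZ[[t]]$ since $C(0)=1$, dividing gives $H_1(t)=\varepsilon_p\frac{t^p}{1-t^p}Z(t)$, and $\dim_kZ(B_{pj})=\rho(pj,\varnothing)$ holds because $\dim_kZ(B)$ is the number of ordinary irreducible characters of $B$, which Nakayama identifies for $B_{pj}$ with partitions of $pj$ having empty $p$-core.

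The only genuine difference from the paper is organisational. The paper establishes the same two factorisations inside the proof of Theorem~\ref{HH-group-block} for arbitrary degree $r$ (refined by residue classes modulo $p$), packages the quotient as $Y(t)=t\phi(t)Z(t)$ and $\sum_n\dim_k\HH^r(k\sym_n)\,t^n=t^p\phi(t^p)P(t)$, and then for $r=1$ pins down $\phi$ by comparing with \cite[Theorem 1.2]{Benson/Kessar/Linckelmann:bkl5}, using that $h(t)\mapsto h(t^p)$ is injective; you instead divide by $C(t)$ directly, a legitimate shortcut which suffices for $r=1$ but does not by itself yield the general-$r$ content of Theorem~\ref{HH-group-block} (that the correction factor is a rational function of $t^p$), which the paper wants for independent reasons. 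One caveat: if you intended your write-up to be self-contained rather than citing \cite{Benson/Kessar/Linckelmann:bkl5}, then the identification of $F_1(t)$ is a real gap --- low-degree checks plus the centraliser heuristic are not a proof, and carrying out that computation (including the doubling at $p=2$) is a substantial piece of work, essentially the content of the earlier paper, not routine bookkeeping.
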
  

The proof of the  above formula  goes  through the  following
theorem  relating generating functions of dimensions of Hochschild
cohomology of blocks of symmetric groups with those of the entire
group algebra and then  invoking  the results of  our previous paper
\cite{Benson/Kessar/Linckelmann:bkl5}.  
Denote by $p(n)$ the number of partitions of $n$, and by $P(t)$ 
the generating function $\sum_{n=0}^\infty p(n)t^n$.
Note that $\dim_k\HH^0(k\sym_n )=\dim_kZ(k\sym_n)=p(n) $.

\begin{theorem} \label{HH-group-block}     
Set  $\displaystyle  Z(t)= \sum_{n=0}^\infty \dim_kZ(B_{pn})\,t^n$.
For any $r\geq 1 $, there exists a rational function $\phi(t)$ 
(depending on $p$ and $r$)  with $\phi(0)$ non-zero, such that
\[ \sum_{n = 0}^\infty \dim_k\HH^r(B_{pn})\,  t^{n} =
  t \phi(t)  Z(t) \] 
and 
\[ \sum_{n = 0}^\infty \dim_k\HH^r( k\sym_n)\,  t^{n}
= t^p \phi(t^p) P(t). \]
\end{theorem}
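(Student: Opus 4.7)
The plan is to exploit the block decomposition of $k\sym_n$ to relate the generating function $G(t):=\sum_n\dim_k\HH^r(k\sym_n)\,t^n$ to $f(t):=\sum_n\dim_k\HH^r(B_{pn})\,t^n$, and then to invoke the main theorem of \cite{Benson/Kessar/Linckelmann:bkl5} and refine it by extracting the dependence on $t^p$ explicitly.

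Step 1 (block decomposition). Since Hochschild cohomology is additive under direct products of algebras, decomposing $k\sym_n=\prod_B B$ into blocks and using the Chuang--Rouquier theorem (every weight $w$ block has the same Hochschild cohomology as $B_{pw}$) gives
\[
\dim_k\HH^r(k\sym_n)=\sum_{w\ge 0}c(n-pw)\dim_k\HH^r(B_{pw}),
\]
where $c(m)$ is the number of $p$-core partitions of $m$. Setting $C(t)=\sum_m c(m)t^m$, this translates to the generating-function identity $G(t)=C(t)f(t^p)$.

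Step 2 (identifying $C$ and $Z$). The classical $p$-core/$p$-quotient bijection gives $P(t)=C(t)P(t^p)^p$. The identity $\dim_kZ(B_{pw})=\rho(pw,\varnothing)$, combined with the observation that a partition of $pw$ with empty $p$-core is determined by a $p$-tuple of partitions of total size $w$ (its $p$-quotient), gives $Z(t)=P(t)^p$. Hence $C(t)=P(t)/Z(t^p)$, so
\[
\frac{G(t)}{P(t)}=\frac{f(t^p)}{Z(t^p)},
\]
which is automatically a power series in $t^p$.

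Step 3 (invoke the previous paper and extract rationality in $t^p$). From \cite{Benson/Kessar/Linckelmann:bkl5}, for each fixed $r\ge 1$ there exists a rational function $\psi(t)$ with $G(t)=\psi(t)P(t)$. By Step 2, $\psi(t)$ is a rational function whose Taylor expansion involves only powers of $t^p$. A short Galois argument (average $\psi$ over $t\mapsto\zeta^i t$ for $\zeta$ a primitive $p$-th root of unity: this leaves $\psi$ fixed, while the average lies in $\bQ(\zeta)(t^p)\cap\bQ(t)=\bQ(t^p)$) shows that such a rational function is itself rational in $t^p$, so $\psi(t)=\eta(t^p)$ for some rational $\eta$.

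Step 4 (starting degree and conclusion). For $n<p$ the algebra $k\sym_n$ is semisimple, hence $\HH^r(k\sym_n)=0$; for $n=p$, the conjugacy-class decomposition $\HH^r(kG)\cong\bigoplus_{[g]}H^r(C_G(g),k)$ applied to $G=\sym_p$ already yields the nonzero contribution $H^r(C_p,k)=k$ from the class of a $p$-cycle. Thus $G(t)$ vanishes to order exactly $p$, and since $P(t)$ has constant term $1$ the same holds for $\psi(t)=\eta(t^p)$. Writing $\eta(s)=s\phi(s)$ gives a rational $\phi$ with $\phi(0)\ne 0$ and $G(t)=t^p\phi(t^p)P(t)$, the second displayed formula. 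Substituting into Step 2 yields $f(t^p)=t^p\phi(t^p)Z(t^p)$, i.e.\ $f(s)=s\phi(s)Z(s)$, the first. The main obstacle is the Galois-theoretic step, namely that a rational function in $t$ which is a power series in $t^p$ is rational in $t^p$; the remaining steps are formal consequences of the block decomposition and the previous paper.
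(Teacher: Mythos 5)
Your proposal is correct, and its skeleton coincides with the paper's: the Nakayama parametrisation of blocks by $p$-cores together with the Chuang--Rouquier derived equivalences gives $\sum_n\dim_k\HH^r(k\sym_n)t^n=C(t)\,f(t^p)$, the same relation with $r=0$ gives $P(t)=C(t)\,Z(t^p)$, one divides, invokes the rationality result of \cite{Benson/Kessar/Linckelmann:bkl5}, descends from rationality in $t$ to rationality in $t^p$, and pins down the order of vanishing using the nonvanishing of $\HH^r$ of the weight-one block. Where you differ is in the execution of two sub-steps, and both of your variants are sound. First, you obtain $P(t)=C(t)Z(t^p)$ combinatorially, via $Z(t)=P(t)^p$ (empty-core partitions of $pw$ correspond to $p$-quotients of total size $w$, using $\dim_kZ(B_{pw})=\rho(pw,\varnothing)$) and the core--quotient identity $P(t)=C(t)P(t^p)^p$; the paper instead reads off the same identity from the block decomposition of $Z(k\sym_n)$ and $\dim_kZ(k\sym_n)=p(n)$, keeping track of residue classes via the series $C_s$. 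Your route yields the extra explicit fact $Z(t)=P(t)^p$ (not stated in the paper) and does not need the derived equivalences in degree $0$, at the cost of invoking $\dim_kZ(B)=|\Irr(B)|$ and the core--quotient bijection. Second, for the descent you argue that a rational function of $t$ whose Taylor expansion lies in $\bQ[[t^p]]$ is rational in $t^p$, via invariance under $t\mapsto\zeta t$ and $\bQ(\zeta)(t^p)\cap\bQ(t)=\bQ(t^p)$; this is valid here because the coefficients are rational, whereas the paper's Lemma~\ref{p-subs} achieves the same descent by an elementary comparison of coefficients that works over any integral domain and avoids roots of unity altogether. Your determination of the exact order of vanishing (semisimplicity of $k\sym_n$ for $n<p$, plus the centraliser decomposition giving $H^r(C_p,k)\neq 0$ inside $\HH^r(k\sym_p)$) is essentially the paper's Remark~\ref{rem2}. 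Two small points worth making explicit: the count $c(n-pw)$ of weight-$w$ blocks is exactly the Nakayama conjecture (Brauer, Robinson) and should be cited, and when you assert $\psi(\zeta t)=\psi(t)$ as rational functions you should note that $\psi$ is regular at $0$, so it is determined by its Taylor expansion there.
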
 

\begin{remark}
In \cite{Benson/Kessar/Linckelmann:bkl5}, we proved that 
\[ \sum_{n = 0}^\infty \dim_k\HH^r(B_{pn})\,  t^{n} =
R_{p,r}(t)P(t) \]
where $R_{p,r}(t)$ is a rational function of $t$. Ken Ono asked us
whether $R_{p,r}(t)$ is a rational function of $t^p$, and
Theorem~\ref{HH-group-block} proves that this is the case,
with $R_{p,r}(t)=t^p\phi(t^p)$.
\end{remark}

\begin{remark} \label{rem2} 
The constant coefficient   of $\phi(t)$ in Theorem \ref{HH-group-block} 
is equal to $y_1=\dim_k(\HH^r(B_p))$. Since $B_p$ is derived equivalent to 
$k(C_p\rtimes C_{p-1})$, we have
$y_1=\dim_k \HH^r(k(C_p\rtimes C_{p-1}))$. An easy
calculation, using the centraliser decomposition, shows that for $r\geq 1$ we have 
$y_1=2$ if $r \equiv 0$ or $-1$ modulo $2(p-1)$ (which is
in particular the case if $p=2$) and $y_1=1$ otherwise.
\end{remark} 

\begin{remark} \label{rem3} 
We note that the above results do not depend on the choice of the field $k$.
If $k'$ is an extension field of $k$ and $B$ a block of $k\sym_n$ for some positive
integer $n$, then $B'=k'\otimes_k B$ is a block of $k'\sym_n$ having the same defect 
groups as $B$, and for any finite-dimensional $k$-algebra $A$  we have  a graded
$k$-algebra isomorphism
$\HH^*(k'\otimes_k A)\cong$ $k'\otimes_k \HH^*(A)$.
\end{remark} 

\section{Proofs.}  

We begin with an elementary lemma.

\begin{lemma} \label{p-subs}    
Let $R$ be an integral domain  and $m$ a positive integer.  If $ h(t)
\in R[[t]] $  is such that  $h (t^m)  \in R((t))  $ is a rational
function,  then $h(t) $ is also a rational function.
\end{lemma}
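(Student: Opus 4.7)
The plan is to extract from the rationality of $h(t^m)$ a linear recurrence satisfied by the coefficients of $h(t)$, which then forces $h(t)$ itself to be rational. Let $K$ denote the fraction field of $R$ and write $h(t^m)=p(t)/q(t)$ with $p,q\in K[t]$ coprime and $q\ne 0$; since $h(t^m)$ is a power series, necessarily $q(0)\ne 0$. Writing $h(t)=\sum_{n\ge 0}a_n t^n$ and $h(t^m)=\sum_k c_k t^k$, the relationship between the two is simply $c_{mn}=a_n$ and $c_k=0$ whenever $m\nmid k$.

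Comparing coefficients of $t^n$ in the identity $q(t)h(t^m)=p(t)$ yields the standard recurrence $\sum_j q_j c_{n-j}=0$ valid for all $n>\deg p$. I would then specialise this to $n=mk$: the support condition on the $c_\ell$ forces the surviving terms to be exactly those with $m\mid j$, so that for all sufficiently large $k$,
\[
\sum_{\substack{i\ge 0\\ mi\le \deg q}} q_{mi}\,a_{k-i}=0.
\]
Setting $\tilde q(u):=\sum_{i:\, mi\le \deg q} q_{mi}\,u^i$, we have $\tilde q(0)=q_0\ne 0$, so $\tilde q\ne 0$, and the recurrence says precisely that $\tilde q(u)h(u)\in K[[u]]$ agrees with a polynomial from some index onward, hence is itself a polynomial $\tilde p(u)\in K[u]$. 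Therefore $h(u)=\tilde p(u)/\tilde q(u)$ is rational.

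There is no substantial obstacle here: the argument is essentially bookkeeping once one notices that the coefficients of $h(t^m)$ are supported on the multiples of $m$. The only normalisation required, $q(0)\ne 0$, is automatic from $h(t^m)$ being a power series, and the resulting denominator $\tilde q$ is nonzero for the same reason.
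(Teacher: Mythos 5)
Your proof is correct and rests on the same key observation as the paper's: since $h(t^m)$ is supported on powers of $t$ divisible by $m$, the residue classes mod $m$ decouple in the identity $q(t)h(t^m)=p(t)$, and the class of multiples of $m$ yields $h(u)=\tilde p(u)/\tilde q(u)$. The only cosmetic differences are that you normalise $q(0)\ne 0$ by working with a coprime representation over the fraction field and then use the $s=0$ class, whereas the paper stays over $R$, decomposes both numerator and denominator into all $m$ classes, and picks any nonzero class; both come to the same thing.
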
 

\begin{proof}    
Let $h(t) = \sum_{n=0}^{\infty} h_n t^n $, $ h_n \in
  R$    and suppose that  $a(t), b(t)  \in  R[t]$   are such that
\[  \sum_{n=0}^{\infty} h_n t^{mn} =   h(t^m)  = \frac{a(t)}{b(t)} . \]
If $h(t) =0$, then there is nothing to prove.  So, we assume that
$h(t) \ne 0$.  Write 
\[ a(t)  = \sum_{s=0}^{m-1}    a_s(t^m)\,  t^s,    \qquad b(t) =
  \sum_{s=0}^{m-1}    b_s(t^m)\,  t^s, \]
for    $a_s(t), b_s(t)  \in R[t]$, $ 0\leq s \leq  m-1 $. 
Comparing coefficients of powers of $t$, the equality  
\[ a(t)  =  b(t) \,   \sum_{n=0}^{\infty} h_n\, t^{mn}  \]
implies the  equality    
\[ b_s(t^m)  =     a_s(t^m)\,\sum_{n=0}^{\infty} h_n\, t^{mn}  \] 
for each $ s$, $0\leq s \leq m-1$.
Choose $ s  $ such that $a_s (t)  \ne 0 $. Then $h(t)
=a_s(t) /b_s(t) $  is a rational  function of $t$.
\end{proof}

\begin{proof}[Proof of Theorem~\ref{HH-group-block}]
Let $r \geq 1 $.  For $ n\geq 0 $, let $c(n) $  denote   the number
of $p$-core partitions of $n$ and for  each $s$,    $ 0\leq s \leq
p-1$, set   $C_s(t)  =  \sum_{n=0} ^{\infty}  c(np+s) \, t^n $.  
For $w \geq 0 $, set   $z_{pw} =  \dim_k Z(B_{pw} )$  and    
$y_{pw}=  \dim_k \HH^r(B_{pw})$. We use the notation
\[ Z(t)  =\sum_{n=0}^{\infty} z_{pn}\, t^n\]
from the statement of Theorem \ref{HH-group-block} and we set
  \[ Y(t) =   \sum_{n=0}^{\infty}  \dim_k \HH^r(B_{pn})\, t^n=
  \sum_{n=0}^{\infty}  y_{pn}\, t^n. \]
Note that  by \cite[Theorem 7.2]{Chuang/Rouquier:2008a},   $y_{pw} $ is  the
dimension of the degree $r$ Hochschild cohomology of  any weight $w$ block of a  
symmetric group algebra.
Further, note that by the  Nakayama conjecture, proved by   
Brauer~\cite{Brauer:1947a} and Robinson~\cite{Robinson:1947a},  
the number of   weight
$w$  blocks  of $k\sym_n $ equals   $c(n-pw)  $.   Since the
Hochschild cohomology  of a finite dimensional algebra is the direct
sum of the Hochschild cohomologies of its  blocks, we  have that for
any  $s$ with  $0\leq s \leq p-1 $,  and any $n \geq 0$,
\[ \dim_k Z(k\sym_{pn+s} )= \sum_{w=0} ^{n}   z_{pw} c (p(n-w)
  +s), \] 
and 
\[ \dim_k\HH^r( k\sym_{pn +s})  = \sum_{w=0} ^{n}   y_{pw} c (p(n-w)
  +s). \]
In other words, for any $s$, $0\leq s \leq p-1 $, 
\begin{equation} \label{eq1} 
\sum_{n=0}^{\infty} \dim_k Z(k\sym_{pn+s} )\, t^{pn+s}  =t^s Z(t^p)
C_s(t^p)  
\end{equation}  and 
\begin{equation}\label{eq2}  
\sum_{n=0}^{\infty}  \dim_k\HH^r( k\sym_{pn +s})\, t^{pn+s}  =
t^sY(t^p) C_s(t^p).  
\end{equation}
Since  $B_0 \cong k $, we have $z_0= \dim_k Z(B_0) =  1$ 
and from this it follows that   $Z(t)$  is invertible in $\bZ[[t]]$.  
On the other hand,   we have
$y_0= \dim_k\HH^r(B_0) = 0 $ and $y_1=\dim_k\HH^r(B_p) \neq 0$ 
(cf. Remark \ref{rem2}).
 In particular,
$Y(t) $ is  divisible by $t $ but not by $t^2$
in $\bZ[[t]]$. So we may define a power series in $t$, with non-zero constant
term, 
\[ \phi(t)=(t^{-1}Y(t))Z(t)^{-1}\in\bZ[[t]], \] 
and then we have
\begin{equation} \label{eq3} 
Y (t)  = t \phi(t)  Z(t). 
\end{equation}
 Now, 
\begin{align*}    
\dim_k\HH^r( k\sym_n)\, t^{n}
&=  \sum_{s=0}^{p-1}  \sum_{n=0}^{\infty}   
\dim_k\HH^r( k\sym_{pn +s}) \, t^{pn+s} =  
Y(t^p)  \sum_{s =0}^{p-1} C_s(t^p)\, t^s  \\  
& =  t^p \phi(t^p)  \sum_{s =0}^{p-1} Z(t^p)C_s(t^p)  \, t^s 
\end{align*}
where we  have used~\eqref{eq2}  for the second equality 
and~\eqref{eq3}  for the  third equality.
On the  other hand,  by~\eqref{eq1},
\[  P(t)=  \sum_{s =0}^{p-1} \sum_{n=0}^{\infty}  
\dim_kZ( k\sym_{pn+s})\, t^{pn+s} 
=\sum_{s=0}^{p-1}  Z(t^p)  C_s(t^p) \,t^s. \]
So,  
\[ \sum_{n=0}^{\infty}  \dim_k\HH^r( k\sym_n) \, t^{n}=   t^p\phi(t^p)  P(t). \]
 Now by Theorem~1.3 of  \cite{Benson/Kessar/Linckelmann:bkl5}  we
 have that $t^p \phi(t^p) $ is a  rational  function of $t$.  It then 
follows from Lemma~\ref{p-subs} that $\phi(t)$ is a rational function
of $t$, and by the above, $\phi(0)$ is non-zero.
\end{proof}

\begin {proof}[Proof of Theorem~\ref{HH1-blockdim}]   
By  Theorem 1.2 of  \cite{Benson/Kessar/Linckelmann:bkl5}, 
\[ \displaystyle\sum_{n=0}^\infty \dim_{k}\HH^1(k\sym_n)\,t^n=
\begin{cases}
\displaystyle\frac{2t^2}{1-t^2}\,P(t) &p=2, \\[10pt]
\displaystyle\frac{t^p}{1-t^p}\,P(t) & p \ge 3.
\end{cases} \]

Note that in  \cite{Benson/Kessar/Linckelmann:bkl5}  the base field
is taken to be ${\mathbb F}_p $, but the  above result clearly  holds
as well for any field of characteristic $p$ (cf. Remark \ref{rem3}).  By
Theorem~\ref{HH-group-block}  and its notation,  applied with $r=1$, 
\[ \displaystyle\sum_{n=0}^\infty \dim_{k}\HH^1(B_{pn})\,t^n=
\begin{cases}
\displaystyle\frac{2t}{1-t}\, Z(t) &p=2, \\[10pt]
\displaystyle\frac{t}{1-t}\,Z(t) & p \ge 3.
\end{cases} \]

Here, note that $h(t) \to  h(t^m) $  is an injective map  on ${\mathbb Z} [[t]] $.
This, along with  \cite[Theorem 7.2]{Chuang/Rouquier:2008a}  proves  the  
equations
\[ \displaystyle \dim_{k}\HH^1(B_{pn})\, =
\begin{cases}
\displaystyle  2\sum_{j=0}^{w-1} \dim_k  Z(B_{pj} )
  &p=2, \\[10pt]
\displaystyle    \sum_{j=0}^{w-1} \dim_k  Z(B_{pj} )
& p \ge 3.
\end{cases} \]
of Theorem~\ref{HH1-blockdim}. The remaining  equations  follow
from the  Nakayama Conjecture, proved in~\cite{Brauer:1947a,Robinson:1947a}, 
which states that characters of a symmetric group belong to the same
$p$-block if and only if the corresponding partitions have the same
$p$-core, implying in particular that $\dim_k Z(B_{pj})=\rho(pj, \varnothing)$.
\end{proof}  

 Theorem~\ref{HH1-blocknonvanish}  is an immediate 
consequence of Theorem~\ref{HH1-blockdim}.

\bigskip\noindent
{\bf Acknowledgements.} 
The first  and second authors are grateful to City, University of
London for its hospitality during the research for this paper. 
The third author  acknowledges support from
EPSRC grant EP/T004592/1.

\bibliographystyle{amsplain}
\bibliography{../repcoh}

\end{document}